\newtheorem{thm}{Theorem}[section]
\newtheorem{theorem}[thm]{Theorem}
\newtheorem{corollary}[thm]{Corollary}
\newtheorem{observation}[thm]{Observation}
\newtheorem{lemma}[thm]{Lemma}
\newtheorem{proposition}[thm]{Proposition}
\newtheorem{definition}[thm]{Definition}
\theoremstyle{remark}
\newcommand{\RR}{\mathbb R}
\newcommand{\CC}{\mathbb C}
\newcommand{\HH}{\mathbb H}
\newcommand{\ip}[2]{\left\langle#1,#2\right\rangle}
\newcommand{\absip}[2]{\left| \left\langle#1,#2\right\rangle \right|}
\DeclareMathAlphabet{\mathpzc}{OT1}{pzc}{m}{it}
\DeclareMathOperator{\spn}{span}
\renewcommand\Re{\operatorname{Re}}
\renewcommand\Im{\operatorname{Im}}
\begin{document}

\title{phase Retrieval verses phaseless reconstruction}
\author[Botelho-Andrade, Casazza, Nguyen, Tremain
 ]{Sara Botelho-Andrade, Peter G. Casazza, Hanh Van Nguyen, and Janet C. Tremain}
\address{Department of Mathematics, University
of Missouri, Columbia, MO 65211-4100}

\thanks{The authors were supported by
 NSF DMS 1307685; and  NSF ATD 1042701 and 1321779; AFOSR  DGE51:  FA9550-11-1-0245}

\email{Casazzap@missouri.edu; hnc5b@mail.missouri.edu}
\email{ Tremainjc@missouri.edu}

\begin{abstract}
In 2006, Balan/Casazza/Edidin \cite{BCE} introduced the frame theoretic study of phaseless reconstruction.  Since then, this has turned into a very active area of research.  Over the years, many people have replaced the term {\it phaseless reconstruction} with {\it phase retrieval}.  Casazza then asked:  {\it Are these really the same?}  In this paper, we will show that phase retrieval is equivalent
to phaseless reconstruction.  We then show, more generally, that phase retrieval by projections is equivalent to phaseless reconstruction by projections.  Finally, we study {\it weak
phase retrieval} and discover that it is very different from
phaseless reconstruction.
\end{abstract}

\maketitle

\section{Introduction}
Phase retrieval is an old problem in signal processing and has been studied for over 100 years by electrical engineers.
Let $x=(a_1,a_2,\ldots,a_m)$ and $y=(b_1,b_2,\ldots,b_m)$
be vectors in $\HH_m$.  We say that $x,y$ have the {\bf same phases} if
\[ \mbox{phase }a_i = \mbox{phase }b_i,\mbox{ for all }
i=1,2,\ldots,m.\]

\begin{definition}
Let $\Phi=\{\phi_i\}_{i=1}^n$ be a family of vectors in $\HH_m$ (resp. $\{P_i\}_{i=1}^n$ is a family of projections on $\HH_m$) satisfying:  for every
$x=(a_1,a_2,\ldots,a_m)$ and $y=(b_1,b_2,\ldots b_m)$ and
\begin{equation}\label{E1} \absip{x}{\phi_i}=\absip{y}{\phi_i},
\mbox{ for all } i=1,2,\ldots,n.
\end{equation}
Respectively,
\begin{equation}\label{E2} \|P_ix\|=\|P_iy\|, \mbox{ for all }i=1,2,\ldots,n.
\end{equation}
\begin{enumerate}
\item If this
implies there is a $|\theta|=1$ so that $x$ and $\theta y$
have the same phases,
  we say $\Phi$ does {\bf phase retrieval} (Respectively, $\{P_i\}_{i=1}^n$ does
{\bf phase retrieval}).  Moreover, in the real case, if
$\theta =1$ we say $x$ and $y$ have the {\bf same signs}
and if $\theta=-1$ we say $x$ and $y$ have {\bf opposite
signs}.
\item If this implies there is a $|\theta|=1$ so that
$x=\theta y$, we say $\Phi$ (does {\bf phaseless reconstruction.} (Respectively, $\{P_i\}_{i=1}^n$ does
{\bf phaseless reconstruction}.)
\end{enumerate}
\end{definition}

In the setting of frame theory,
the concept of phaseless reconstruction was
introduced in 2006 by Balan/Casazza/Edidin \cite{BCE}.
At that time, they showed that in the real case, a
{\it generic} family of (2m-1)-vectors in $\RR_m$ does
phaseless reconstruction and no set of (2m-2)-vectors can do
this.  In the complex case, they showed that a {\it generic}
set of (4m-2)-vectors does phaseless reconstruction.  Heinossaari, Mazzarella and Wolf \cite{HMW} show that
n-vectors doing phaseless reconstruction in $\CC_m$ requires
$n\ge 4m-4-2\alpha$, where $\alpha$ is the number of $1's$
in the binary expansion of (m-1).  Bodmann \cite{B} showed
that phaseless reconstruction in $\CC_m$ can be done with $(4m-4)$-vectors.  Later, Conca, Edidin, Hering, and Vinzant
\cite{CEHV} that a {\it generic} frame with $(4m-4)$-vectors does phaseless reconstruction in $\CC_m$.  They also show
that if $m=2^k+1$ then no $n$-vectors with $n< 4m-4$ can
do phaseless reconstruction.  Bandeira, Cahill, Mixon, and Nelson \cite{BCMN} conjectured that for all $m$, no fewer than
(4m-4)-vectors can do phaseless reconstruction.  Recently,
Vinzant \cite{V} showed that this conjecture does not hold by giving 11 vectors in $\CC_4$ which do phaseless reconstruction.

Over the years, we have started replacing the phrase:
{\it phaseless reconstruction} with the phrase: {\it phase
retrieval}.  Casazza at a meeting in 2012 raised the question:  {\it Are these really the same?}  In this paper
we will answer this question
in the affirmative, and the same for phase retrieval by projections, and
then show that the notion of {\it weak phase retrieval} is not equivalent to phaseless reconstruction.

The problem occurred here because of the way we translated the engineering version of {\it phase retrieval} into the language of frame theory.  The engineers are working with
the modulus of the Fourier transform and want to recover the phases so they can invert the Fourier transform to discover the signal.  So all they need to do is to recover the phase.
But in the frame theory version of this, for $x=(a_1,a_2,\ldots,a_m)$ we are really trying to recover two things:
\begin{enumerate}
\item Recover the phases of the $a_i$.
\item Recover $|a_i|$ (which in the engineering case, is
already known).
\end{enumerate}

For notation, we will use $\HH_m$ to denote a real or complex $m$-dimensional Hilbert space and for the real and
complex cases we use $\RR_m$ and $\CC_m$ respectively.

\section{Phase Retrieval verses Phaseless Reconstruction}

We will need the {\it complement property} from \cite{BCE}.

\begin{definition}
A family of vectors $\{\phi_i\}_{i=1}^n$ in $\HH_m$ has
the {\bf complement property} if for every $I\subset [n]$,
either $\{\phi_i\}_{i\in I}$ spans $\HH_m$ or $\{\phi_i\}_{i\in I^c}$ spans $\HH_m$.
\end{definition}

We will prove that phase retrieval implies the complement
property for both the real and complex cases and even
in a more general setting.
First, we need to make a couple observations:

\begin{observation}\label{o1}
If $\{\phi_i\}_{i=1}^n$ does phase retrieval in $\HH_m$,
then span $\{\phi_i\}_{i=1}^n = \HH_m$.  For otherwise,
there would exist $0\not= x\in \HH_m$ so that
\[ \langle x,\phi_i\rangle = \langle 0,\phi_i\rangle =0,
\mbox{ for all } i=1,2,\ldots,n,\]
while $x,0$ do not have the same phase.
\end{observation}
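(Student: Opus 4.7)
My plan is to prove the contrapositive: if $\spn\{\phi_i\}_{i=1}^n$ is a proper subspace of $\HH_m$, then $\{\phi_i\}_{i=1}^n$ cannot do phase retrieval. To exhibit the failure, I will produce two vectors whose measurement moduli agree coordinate-wise but which cannot be identified through any global unimodular scalar.

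The natural candidates are $y=0$ and any nonzero $x$ in the orthogonal complement of $\spn\{\phi_i\}_{i=1}^n$, which exists by assumption. For this pair $\absip{x}{\phi_i} = \absip{0}{\phi_i} = 0$ for every $i=1,\ldots,n$, so the hypothesis (\ref{E1}) of the phase retrieval definition is satisfied. On the other hand, the set $\{\theta\cdot 0 : |\theta|=1\}$ consists only of $0$ itself, so phase retrieval would force $x$ to have the same phases as $0$. Since $x\ne 0$, it has some coordinate $a_j\ne 0$ whose phase cannot match that of the vanishing $j$-th coordinate of $0$, giving the required contradiction.

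The only delicate issue here is the convention for phases at zero entries, but under the natural reading built into the paper's definition---two vectors have the ``same phases'' only when their zero sets coincide and on the common support one is a positive real scalar multiple of the other---the step is immediate. No further machinery is needed; the observation is essentially a one-line verification once the test pair $(x,0)$ is identified, and I do not anticipate any genuine obstacle beyond recognizing the right test vectors.
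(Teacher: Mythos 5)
Your proposal is correct and is essentially the paper's own argument: the paper likewise takes a nonzero $x$ orthogonal to $\spn\{\phi_i\}_{i=1}^n$, pairs it with $0$, notes that all measurements vanish for both, and concludes phase retrieval fails because $x$ and $0$ cannot have the same phases (zero having no phase, as in Observation \ref{o2}). Nothing further is needed.
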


\begin{observation}\label{o2}
If $x=(a_1,a_2,\ldots,a_m)$ and $y=(b_1,b_2,\ldots,b_m)$
 have the same phases, then $a_i = 0$ if and only if
 $b_i=0$.  I.e.  zero has no phase.
\end{observation}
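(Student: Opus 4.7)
The plan is to argue directly from the convention that \emph{phase} is a function defined only on nonzero scalars. For a nonzero $c\in\HH$ (real or complex) we set $\text{phase}(c)=c/|c|$, which is a unit scalar; there is no sensible unit scalar to assign to $0$, since $\theta\cdot 0=0$ for every $|\theta|=1$. With this convention, the condition ``$\text{phase}\,a_i=\text{phase}\,b_i$'' as used in the definition of same phases is meaningful at index $i$ only when both sides are defined, i.e.\ only when $a_i\neq 0$ and $b_i\neq 0$, or when both are $0$ and the equality is understood vacuously.

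Given this, I would carry out the proof as a short contrapositive. Suppose, toward a contradiction, that there is an index $i$ with $a_i=0$ but $b_i\neq 0$. Then $\text{phase}\,b_i=b_i/|b_i|$ is a well-defined unit scalar (in particular, nonzero), while $\text{phase}\,a_i$ is not defined. Hence the relation $\text{phase}\,a_i=\text{phase}\,b_i$ fails at index $i$, contradicting the hypothesis that $x$ and $y$ have the same phases. The symmetric case $a_i\neq 0$, $b_i=0$ is identical after swapping the roles of $x$ and $y$. Thus $a_i=0$ forces $b_i=0$ and vice versa, establishing the biconditional $a_i=0\iff b_i=0$.

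The main ``obstacle'' here is not mathematical but interpretive: one must fix the convention for $\text{phase}\,0$ unambiguously, since the whole observation is a reminder that the ``same phases'' relation silently encodes the zero pattern of the coordinates, even though it says nothing about the magnitudes $|a_i|,|b_i|$ on the nonzero coordinates. Once the convention is spelled out, the proof is a one-line contrapositive. This observation will presumably be invoked later in the section whenever one needs to promote equality of phases between two vectors to a statement about their supports -- a step that will matter when comparing phase retrieval with phaseless reconstruction.
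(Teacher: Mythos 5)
Your proposal is correct and matches the paper's reasoning: the paper justifies the observation solely by the remark that ``zero has no phase,'' which is exactly the convention you spell out before running the one-line contrapositive. No difference in approach, just more explicit bookkeeping on your part.
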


We need a result from \cite{BCE}.  This result was proved
in \cite{BCE} for the real case and it was stated that the
same proof works in the complex case.  In \cite{BCMN} they
state that this claim is {\it erroneous}, and this proof
does not work in the complex case.  But, the proof in
\cite{BCE} does work in the complex case and is much easier than the one supplies in \cite{BCMN}.  So, we
will now show that this argument does in fact work in the
coomplex case.

\begin{theorem}[Balan/Casazza/Edidin]\label{t1}
Let $\Phi=\{\phi_i\}_{i=1}^n$ be vectors in $\HH_m$.   If
$\Phi=\{\phi_i\}_{i=1}^n$ does phaseless reconstruction, then it has complement property.  Moreover, in the real case, these are equivalent while in the complex case they
are not equivalent.
\end{theorem}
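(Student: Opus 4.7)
The plan follows the statement's three parts. For the first direction (phaseless reconstruction implies the complement property), I argue the contrapositive. Suppose some $I \subset [n]$ fails complement property, so one may pick nonzero $u$ orthogonal to $\{\phi_i\}_{i \in I}$ and nonzero $v$ orthogonal to $\{\phi_i\}_{i \in I^c}$. Set $x = u+v$ and $y = u-v$. A direct check using the orthogonality shows $\absip{x}{\phi_i}=\absip{y}{\phi_i}$ for every $i$: for $i \in I^c$ both sides equal $\ip{u}{\phi_i}$, and for $i \in I$ both sides equal $\ip{v}{\phi_i}$ up to a sign.

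To derive a contradiction one must rule out $x = \theta y$ for every $|\theta|=1$. In the real case this is immediate: $\theta = \pm 1$ forces $u=0$ or $v=0$. The complex case is the delicate step---the one \cite{BCMN} objected to in the \cite{BCE} argument---but one can push it through. From $u+v = \theta(u-v)$ one gets $(1-\theta)u = -(1+\theta)v$. The values $\theta = \pm 1$ again force $u=0$ or $v=0$. For any other $\theta$, one has $u = cv$ with $c \neq 0$, so $v = c^{-1}u$ is orthogonal to $\{\phi_i\}_{i \in I}$ as well as to $\{\phi_i\}_{i \in I^c}$, hence to the entire family. Since phaseless reconstruction trivially implies phase retrieval, Observation \ref{o1} gives that the family spans $\HH_m$, forcing $v = 0$---a contradiction.

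For the real-case converse, given $x,y$ with $\absip{x}{\phi_i}=\absip{y}{\phi_i}$ for every $i$, partition $[n]$ into $I = \{i : \ip{x}{\phi_i}=\ip{y}{\phi_i}\}$ and $I^c = \{i : \ip{x}{\phi_i}=-\ip{y}{\phi_i}\}$. Then $x-y$ annihilates $\{\phi_i\}_{i \in I}$ and $x+y$ annihilates $\{\phi_i\}_{i \in I^c}$, so the complement property forces one of $x-y$, $x+y$ to be zero, giving $x = \pm y$.

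For the complex non-equivalence I exhibit the concrete counterexample $\{e_1, e_2, e_1+e_2\}$ in $\CC_2$, which plainly has the complement property (any two of the three vectors span). The pair $x=(1,i)$ and $y=(i,1)$ produces identical moduli $1,1,\sqrt{2}$ on the three frame elements, yet $x = \theta y$ would force $\theta=-i$ from the first coordinate and $\theta=i$ from the second. The main obstacle is the complex case of the first implication: the $\theta = \pm 1$ dichotomy disappears, and one must extract the proportionality $u=cv$ and close it off against the spanning property from Observation \ref{o1}.
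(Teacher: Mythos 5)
Your proof of the forward implication is essentially the paper's own argument (pick nonzero $u,v$ orthogonal to the two halves of a partition violating the complement property, compare $u+v$ and $u-v$, dispose of $\theta=\pm1$, and for other $\theta$ use the resulting proportionality to contradict spanning via Observation \ref{o1}); it is correct, and it goes through in the complex case exactly as the paper claims. You additionally spell out the real-case converse and an explicit complex counterexample $\{e_1,e_2,e_1+e_2\}$ with $x=(1,i)$, $y=(i,1)$, both correct, whereas the paper's proof only treats the forward direction and leaves the remaining assertions to \cite{BCE} and known results.
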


\begin{proof}
Assume $\Phi$ fails complement property but does phaseless reconstruction.  Choose $I\subset [n]$ so that neither of the sets of vectors $\{\phi_i\}_{i\in I}$ or $\{\phi_i\}_{i\in I^c}$ spans $\HH_m$.  Choose
$\|x\|=1=\|y\|$ so that $x \perp \phi_i$ for $i\in I$
and $y\perp \phi_i$ for $i\in I^c$.  Then
\[ |\langle x+y,\phi_i\rangle|=|\langle x-y,\phi_i\rangle|,
\mbox{ for all }i=1,2,\ldots,n.\]
Since $\Phi$ does phaseless reconstruction, there is a
$|\theta|=1$ so that
\[ x+y=\theta(x-y), \mbox{ and hence }
(1-\theta)x= -(1+\theta)y.\]
If $\theta = 1$, then $y=0$ and if $\theta = -1$ then
$x=0$, contradicting the fact that $x,y$ are unit norm.
Otherwise,
\[ x = \frac{-(1+\theta)}{1-\theta}y = dy,\mbox{ for }
d \not= 0.\]
Now,
\[ \langle x,\phi_i\rangle = \langle dy,\phi_i\rangle
=0,\mbox{ for all }i=1,2,\ldots,n,\]
and hence $\Phi$ does not span $\HH_m$ contradicting
Observation \ref{o1}.
\end{proof}

\begin{theorem}\label{t3}
Let $\{P_i\}_{i=1}^n$ be projections onto the subspaces
$\{W_i\}_{i=1}^n$ of $\HH_m$ which do phase retrieval. Then
\vskip12pt
 For every orthonormal basis $\{\phi_{i,j}\}_{j=1}^{D_i}$ of $W_i$, the set $\{\phi_{i,j}\}_{i=1, j=1}^{n,\ D_i}$ has complement property.
\end{theorem}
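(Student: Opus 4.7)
The plan is to mirror the proof of Theorem \ref{t1}, replacing the inner product condition $|\langle x,\phi_i\rangle|=|\langle y,\phi_i\rangle|$ with the projection condition $\|P_ix\|=\|P_iy\|$, and using the Pythagorean identity $\|P_ix\|^2=\sum_{j=1}^{D_i}|\langle x,\phi_{i,j}\rangle|^2$ to bridge between the vector family and the projections. First I would record an analog of Observation \ref{o1} for projections: if the $\{P_i\}$ do phase retrieval, then $\sum_{i=1}^n W_i = \HH_m$, for otherwise a nonzero $x$ orthogonal to every $W_i$ satisfies $\|P_ix\|=0=\|P_i\cdot 0\|$ while $x$ and $0$ cannot share phases.

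Next I argue by contradiction. Suppose the concatenated family $\{\phi_{i,j}\}$ fails the complement property, so there is a partition of the index set $\{(i,j)\}$ into $I$ and $I^c$ with neither $\{\phi_{i,j}\}_{(i,j)\in I}$ nor $\{\phi_{i,j}\}_{(i,j)\in I^c}$ spanning $\HH_m$. Pick unit vectors $x,y$ with $x\perp \phi_{i,j}$ for $(i,j)\in I$ and $y\perp \phi_{i,j}$ for $(i,j)\in I^c$. The key computation is
\[
\|P_i(x+y)\|^2 - \|P_i(x-y)\|^2 \;=\; 4\,\Re\langle P_ix, P_iy\rangle \;=\; 4\,\Re\sum_{j=1}^{D_i}\langle x,\phi_{i,j}\rangle\overline{\langle y,\phi_{i,j}\rangle}.
\]
For every pair $(i,j)$, at least one of $x,y$ is orthogonal to $\phi_{i,j}$, so each term of the inner sum vanishes. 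Therefore $\|P_i(x+y)\| = \|P_i(x-y)\|$ for every $i$.

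Phase retrieval by the projections now supplies some $|\theta|=1$ with $x+y = \theta(x-y)$, i.e.\ $(1-\theta)x = -(1+\theta)y$. The cases $\theta=\pm 1$ force $y=0$ or $x=0$, which contradicts unit norms. Otherwise $x = dy$ with $d\neq 0$. But then $x$ inherits orthogonality to the $I^c$-part from $y$ as well, so $x$ is orthogonal to \emph{every} $\phi_{i,j}$, hence to every $W_i$. This contradicts the analog of Observation \ref{o1} for projections established above.

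The only real obstacle is the double-index bookkeeping and getting the polarization identity right in the complex case (using $\Re$ of the inner product rather than the inner product itself); no new ideas beyond the vector proof of Theorem \ref{t1} are required, because the orthonormal basis structure of each $W_i$ reduces $\|P_ix\|$ to a Parseval-style sum of the $|\langle x,\phi_{i,j}\rangle|^2$.
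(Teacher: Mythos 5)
There is a genuine gap at the pivotal step. After establishing $\|P_i(x+y)\|=\|P_i(x-y)\|$ for all $i$, you write that ``phase retrieval by the projections now supplies some $|\theta|=1$ with $x+y=\theta(x-y)$.'' That conclusion is what \emph{phaseless reconstruction} would give; under the paper's definition, phase retrieval only yields that $x+y$ and $\theta(x-y)$ have the \emph{same phases}, with no control on the moduli of the coordinates. The distinction between these two conclusions is precisely the point of this paper, and the equivalence is the content of the theorems being proved, so it cannot be assumed. In effect your argument proves the projection analogue of Theorem \ref{t1} (phaseless reconstruction implies complement property), not Theorem \ref{t3}. Your setup — the analogue of Observation \ref{o1} for projections, the choice of $x\perp\phi_{i,j}$ for $(i,j)\in I$ and $y\perp\phi_{i,j}$ for $(i,j)\in I^c$, and the polarization identity reducing $\|P_i(\cdot)\|^2$ to sums of $|\langle\cdot,\phi_{i,j}\rangle|^2$ — matches the paper and is fine; but once you only know the phases of $x+y$ and $\theta(x-y)$ agree, you cannot conclude $x=dy$ and finish as in the vector case.

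The missing idea is how the paper exploits the weaker hypothesis: since each term $\langle x,\phi_{i,j}\rangle\overline{\langle y,\phi_{i,j}\rangle}$ vanishes, one in fact gets $\|P_i(x+cy)\|=\|P_i(x-cy)\|$ for \emph{every} scalar $c\neq 0$, so phase retrieval gives, for each $c$, a unimodular $\theta$ making the phases of $x+cy$ and $\theta(x-cy)$ agree. Writing $x=(a_1,\ldots,a_m)$, $y=(b_1,\ldots,b_m)$, if some coordinate had $a_{i_0}\neq 0\neq b_{i_0}$, the choice $c=-a_{i_0}/b_{i_0}$ makes $(x+cy)_{i_0}=0$ while $(x-cy)_{i_0}=2a_{i_0}\neq 0$, contradicting Observation \ref{o2} (zero has no phase). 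Hence $x$ and $y$ have disjoint supports, and then $x+y$ and $\theta(x-y)$ cannot have the same phases for any single $\theta$: coordinates in the support of $x$ force $\theta=1$ while coordinates in the support of $y$ force $\theta=-1$ (both supports are nonempty since $\|x\|=\|y\|=1$). Without this extra argument — the free parameter $c$, the appeal to ``zero has no phase,'' and the disjoint-support contradiction — the proof does not go through from the phase-retrieval hypothesis alone.
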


\begin{proof}
Suppose $\{W_i\}_{i=1}^n$ does phase retrieval for $\HH_m$, but fails phaseless reconstruction. By Theorem \ref{t1}, there exist an orthonormal basis $\{\phi_{i,j}\}_{j=1}^{D_i}$ of each $W_i$ such that the set $\{\phi_{i,j}\}_{i=1, j=1}^{n,\ D_i}$ fails the complement property. In other words, there exists $I\subset \{ (i,j): 1\leq i\leq n\text{ and} 1\leq j \leq D_i\}$ so that $\{\phi_{i,j}\}_{(i,j)\in I}$ and $\{\phi_{i,j}\}_{(i,j)\in I^c}$ do not span $\HH_m$. Choose vectors $x,y\in \HH_m$ with $\|x\|=1=\|y\|$,
and $x=(a_1,a_2,\ldots,a_m)$ and $y=(b_1,b_2,\ldots,b_m)$
such that $x\perp \phi_{i,j}$ for all $(i,j)\in I$ and $y\perp \phi_{i,j}$ for all $(i,j)\in I^c$. Note this choice of vectors forces that for each $(i,j)$ either $\ip{x}{\phi_{i,j}}=0$ or $\ip{y}{\phi_{i,j}}=0$.  Fix $0\not= c$.
Then for each $1\leq i \leq n$
\[ |\langle x+cy,\phi_{ij}\rangle| =|\langle x-cy,\phi_{ij}\rangle|,\mbox{ for all }i,j.\]  Hence,
\[\|P_i(x+cy)\|^2=\sum_{j=1}^{D_i} \absip{x+cy}{\phi_{i,j}}^2=\sum_{j=1}^{D_i} \absip{x-cy}{\phi_{i,j}}^2=\|P_i(x-cy)\|^2.\]
By assumption that $\{P_i\}_{i=1}^n$ does phase retrieval, this implies there is a $|\theta|=1$ so that $x+cy$ and
$\theta(x-cy)$ have the same phases.  Assume there exists
some $1\le i_0\le m$ so that $a_{i_0}\not= 0 \not= b_{i_0}$
and let $c = \frac{-a_{i_0}}{b_{i_0}}$.  Then
\[ (x+cy)_{i_0} = a_{i_0}+cb_{i_0}= a_{i_0}+\frac{-a_{i_0}}{b_{i_0}}b_{i_0}=0,\]
while
\[ (x-cy)_{i_0} = a_{i_0}-\frac{-a_{i_0}}{b_{i_0}}=2a_{i_0}
\not= 0.\]
But this contradicts Obeservation \ref{o2}.  It follows that
for every $1\le i \le m$, either $a_i=0$ or $b_i=0$.  Let
$\{e_i\}_{i=1}^m$ be an orthonormal basis for $\HH_m$ and
let $I=\{1\le i \le m:b_i=0\}$.  Then
\[ x+y = \sum_{i\in I}a_ie_i +\sum_{i\in I^c}b_ie_i,
\mbox{ and }x-y = \sum_{i\in I}a_ie_i + \sum_{i\in I^c}(-b_i)e_i.\]
By the above argument, there is a $|\theta|=1$ so that
$x+y$ and $\theta(x-y)$ have this same phase.  But this
is clearly impossible.  This final contradiction completes
the proof.
\end{proof}

We have a number of consequences of Theorem \ref{t3}.
Letting the subspaces $W_i$ be one dimensional, this
becomes a theorem about vectors.

\begin{corollary}
If $\Phi = \{\phi_i\}_{i=1}^n$ does phase retrieval in
$\HH_m$, then $\Phi$ has the complement property.  Hence,
in the real case, phase retrieval and phaseless reconstruction are equivalent properties.
\end{corollary}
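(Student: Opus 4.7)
The plan is to deduce the corollary directly from Theorem \ref{t3} applied to the one-dimensional subspaces $W_i := \mathrm{span}\{\phi_i\}$. First I would note that phase retrieval by the vectors $\{\phi_i\}_{i=1}^n$ is tantamount to phase retrieval by the associated rank-one orthogonal projections $\{P_i\}_{i=1}^n$: after discarding any zero $\phi_i$ (which contributes nothing to either hypothesis), the identity $\|P_ix\|^2 = |\langle x,\phi_i\rangle|^2/\|\phi_i\|^2$ shows that conditions \eqref{E1} and \eqref{E2} are equivalent up to a positive scale factor.

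With that identification in hand, each $W_i$ has the singleton orthonormal basis $\{\phi_i/\|\phi_i\|\}$, so Theorem \ref{t3} immediately yields that the combined family $\{\phi_i/\|\phi_i\|\}_{i=1}^n$ has the complement property. Since rescaling by nonzero scalars preserves the span of every subset, the original family $\{\phi_i\}_{i=1}^n$ inherits the complement property, which proves the first assertion.

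For the second assertion, the real-case half of Theorem \ref{t1} gives complement property $\Rightarrow$ phaseless reconstruction, and combined with the first assertion this chains to phase retrieval $\Rightarrow$ complement property $\Rightarrow$ phaseless reconstruction. The reverse implication is immediate from the definitions: if $x = \theta y$ with $|\theta|=1$, then $x$ and $\theta y$ are literally equal and hence share all phases. There is essentially no substantive obstacle here; Theorem \ref{t3} has already absorbed the real work, and this corollary amounts to specialization to the rank-one case together with the triviality that phaseless reconstruction always entails phase retrieval.
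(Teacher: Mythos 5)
Your proposal is correct and follows the same route the paper intends: the paper derives this corollary from Theorem \ref{t3} by taking the subspaces $W_i$ one dimensional, and then uses Theorem \ref{t1} (complement property implies phaseless reconstruction in the real case) plus the trivial converse. You simply spell out the normalization details (identifying $\absip{x}{\phi_i}$ with $\|P_i x\|$ up to scale, discarding zero vectors) that the paper leaves implicit.
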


We need a result from \cite{CCPW}.

\begin{theorem}\label{t5}
Let $\{P_i\}_{i=1}^n$ be projections onto the subspaces
$\{W_i\}_{i=1}^n$ of $\HH_m$.  The following are equivalent:
\begin{enumerate}
\item $\{P_i\}_{i=1}^n$ does phaseless reconstruction.
\item For every orthonormal basis $\{\phi_{i,j}\}_{j=1}^{D_i}$ of $W_i$, the set $\{\phi_{i,j}\}_{i=1, j=1}^{n,\ D_i}$ does phaseless reconstruction.
\end{enumerate}
\end{theorem}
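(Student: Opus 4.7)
The plan is to prove the two implications separately. Direction $(1)\Rightarrow(2)$ is a one-line Parseval computation, while $(2)\Rightarrow(1)$ rests on an auxiliary lemma that, given a pair of equal-norm vectors $p,q$ in a subspace $W$, constructs an orthonormal basis of $W$ with respect to which $p$ and $q$ have coordinatewise equal moduli.

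For $(1)\Rightarrow(2)$ I would fix any choice of orthonormal bases $\{\phi_{i,j}\}_{j=1}^{D_i}$ of $W_i$, assume $\absip{x}{\phi_{i,j}}=\absip{y}{\phi_{i,j}}$ for all $i,j$, and square-and-sum in $j$ to obtain
\[
\|P_i x\|^2 = \sum_{j=1}^{D_i}\absip{x}{\phi_{i,j}}^2 = \sum_{j=1}^{D_i}\absip{y}{\phi_{i,j}}^2 = \|P_i y\|^2
\]
for every $i$, after which hypothesis $(1)$ produces the required unimodular $\theta$ with $x=\theta y$.

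For $(2)\Rightarrow(1)$ I would first prove the lemma: if $p,q\in W$ satisfy $\|p\|=\|q\|$, then $W$ admits an orthonormal basis $\{\psi_j\}$ with $\absip{p}{\psi_j}=\absip{q}{\psi_j}$ for all $j$. If $p,q$ are linearly dependent then $q=\lambda p$ with $|\lambda|=1$, and any orthonormal basis works; otherwise $\spn\{p,q\}$ is two-dimensional and I would choose $\beta$ so that $q' = e^{i\beta}q$ satisfies $\ip{p}{q'}\in\RR$. Then
\[
\ip{p+q'}{p-q'} = \|p\|^2 - \|q'\|^2 - \ip{p}{q'} + \overline{\ip{p}{q'}} = 0,
\]
so the normalizations $\psi_1,\psi_2$ of $p\pm q'$ form an orthonormal pair in $\spn\{p,q\}$. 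The key computation uses $\ip{q}{q'}=e^{-i\beta}\|q\|^2$ to rewrite $\ip{q}{p\pm q'} = e^{-i\beta}(|\ip{p}{q}|\pm\|p\|^2)$, whose modulus matches that of $\ip{p}{p\pm q'}=\|p\|^2\pm|\ip{p}{q}|$. Extending $\psi_1,\psi_2$ by any orthonormal basis of $\spn\{p,q\}^\perp\cap W$ finishes the lemma, since the added basis vectors annihilate both $p$ and $q$.

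The deduction of $(2)\Rightarrow(1)$ from the lemma is then immediate: assuming $\|P_i x\|=\|P_i y\|$ for all $i$, apply the lemma in each $W_i$ with $p=P_i x$, $q=P_i y$ to produce an orthonormal basis $\{\phi_{i,j}\}_{j=1}^{D_i}$ of $W_i$ with $\absip{P_i x}{\phi_{i,j}}=\absip{P_i y}{\phi_{i,j}}$; because $\phi_{i,j}\in W_i$ these inner products coincide with $\ip{x}{\phi_{i,j}}$ and $\ip{y}{\phi_{i,j}}$ respectively, and hypothesis $(2)$ applied to this particular tuple of bases yields $x=\theta y$. The main obstacle I foresee is the complex case of the lemma: in the real case $p\pm q$ are automatically orthogonal once $\|p\|=\|q\|$, but over $\CC$ one must first absorb the phase of $\ip{p}{q}$ into $q$ and then carefully verify that the residual $e^{-i\beta}$ factor in $\ip{q}{p\pm q'}$ disappears under $|\cdot|$.
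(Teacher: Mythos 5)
Your proposal is correct. Note that the paper does not actually prove Theorem \ref{t5}; it is quoted without proof from \cite{CCPW}, so there is no in-paper argument to compare against. Your two implications are the natural ones: $(1)\Rightarrow(2)$ is indeed just Parseval on each $W_i$, and your key lemma for $(2)\Rightarrow(1)$ --- that any two equal-norm vectors $p,q$ in a subspace $W$ admit an orthonormal basis of $W$ in which their coefficients have equal moduli, built by rotating $q$ to $q'=e^{i\beta}q$ with $\ip{p}{q'}\in\RR$ and normalizing $p\pm q'$ --- is sound, including the complex bookkeeping: $\bigl|\ip{q}{p\pm q'}\bigr|=\bigl|\,|\ip{p}{q}|\pm\|p\|^2\bigr|=\bigl|\ip{p}{p\pm q'}\bigr|$, the degenerate cases ($p,q$ dependent, or $\ip{p}{q}=0$) are covered, and the passage from $\absip{P_ix}{\phi_{i,j}}$ to $\absip{x}{\phi_{i,j}}$ is justified by self-adjointness of $P_i$. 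The only point worth making explicit in a final write-up is that the basis you construct depends on the pair $(x,y)$, which is harmless precisely because hypothesis $(2)$ is quantified over \emph{every} choice of orthonormal bases; this is in essence the same mechanism used in the cited reference, so your argument serves as a self-contained proof of the quoted result.
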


Combining Theorems \ref{t3}, \ref{t5}:

\begin{corollary}
In $\RR_m$, a family of projections $\{P_i\}_{i=1}^n$ does phase retrieval if and only if it does phaseless reconstruction.
\end{corollary}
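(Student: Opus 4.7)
The plan is to dispatch the forward direction by a definitional observation and to devote the substance to the reverse implication by chaining together the three theorems already established in this section.

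For the easy direction, I would simply note that phaseless reconstruction is the formally stronger property. If $\{P_i\}_{i=1}^n$ does phaseless reconstruction and $x,y\in\RR_m$ satisfy $\|P_ix\|=\|P_iy\|$ for all $i$, then $x=\theta y$ for some $|\theta|=1$, so $x$ and $\theta y$ are literally equal and hence trivially have the same phases. Therefore phase retrieval holds.

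For the reverse direction, I would assume $\{P_i\}_{i=1}^n$ does phase retrieval on $\RR_m$ and proceed in three steps. First, by Theorem \ref{t3}, for any orthonormal basis $\{\phi_{i,j}\}_{j=1}^{D_i}$ of each range $W_i$, the aggregated vector family $\{\phi_{i,j}\}_{i=1,\,j=1}^{n,\,D_i}$ has the complement property in $\RR_m$. Second, I would invoke the real-case half of Theorem \ref{t1}, namely that for vectors in $\RR_m$ the complement property is equivalent to phaseless reconstruction, to conclude that the aggregated family does phaseless reconstruction. Third, Theorem \ref{t5} (direction $(2)\Rightarrow(1)$) then lifts this conclusion from the vector family back to the original projections, yielding that $\{P_i\}_{i=1}^n$ does phaseless reconstruction.

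No step in this plan presents a real obstacle, since each of Theorems \ref{t1}, \ref{t3}, and \ref{t5} has already been proved above; the corollary is essentially a bookkeeping assembly of those three results. The one subtlety worth flagging is where the real hypothesis enters: it is used only in the middle step, through Theorem \ref{t1}, whose equivalence of complement property and phaseless reconstruction is known to fail in $\CC_m$. This is precisely why the corollary is stated only over $\RR_m$ rather than for a general $\HH_m$.
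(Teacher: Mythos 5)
Your proposal is correct and follows essentially the same route as the paper, which derives the corollary by combining Theorem \ref{t3} (phase retrieval by projections gives the complement property for the aggregated orthonormal basis vectors), the real-case equivalence of complement property and phaseless reconstruction from Theorem \ref{t1}, and Theorem \ref{t5} to pass back to the projections. Your explicit handling of the trivial direction and your remark on where the real hypothesis enters are consistent with the paper's (more terse) presentation.
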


In the complex case, the complement property is not equivalent to phaseless reconstruction.  We will
show that phase retrieval and phaseless reconstruction
are equivalent in the complex case in the next section.

\section{Complex Case}

\begin{theorem}\label{BCMN}\cite{abandeira13} Consider $\Phi=\lbrace \phi_n\rbrace _{n=1}^N\subseteq \CC^M$ and the mapping $\mathpzc{A}:\CC^M/\mathbb{T}\rightarrow\RR^N$ defined by $(\mathpzc{A}(x))(n):=|\langle x,\phi_n\rangle|^2$. Viewing $\lbrace\phi_n\phi_n^*u\rbrace_{n=1}^N$ as vectors in $\RR^{2M}$, denote $S(u):=\spn _\RR \lbrace \phi_n\phi_n^*u\rbrace_{n=1}^N$. Then the following are equivalent:
\begin{enumerate}
\item[(a)] $\mathpzc{A}$ is injective.
\item[(b)] $\dim S(u)\geq 2M-1$ for every $u\in \CC^M\setminus\lbrace 0\rbrace$.
\item[(c)] $S(u)=\spn_{\RR}\lbrace iu\rbrace^\perp$ for every $u\in\CC^M\setminus\lbrace 0 \rbrace$.
\end{enumerate}
\end{theorem}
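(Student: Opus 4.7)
The plan is to prove the chain (c) $\Rightarrow$ (b) (trivial), (b) $\Rightarrow$ (c) (an a priori inclusion plus a dimension count), and (a) $\Leftrightarrow$ (c) via a polarization identity. The substance is in the last equivalence.

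For (b) $\Leftrightarrow$ (c), I would first check that $S(u)$ always lies inside the real hyperplane $\spn_\RR\{iu\}^\perp$: the one-line computation
\[
\ip{\phi_n\phi_n^*u}{iu}_\RR = \Re\bigl((iu)^*\phi_n\phi_n^*u\bigr) = \Re\bigl(-i|\phi_n^*u|^2\bigr) = 0
\]
does it. When $u\neq 0$, this hyperplane has real dimension $2M-1$, so the bound $\dim S(u)\geq 2M-1$ is equivalent to equality in the inclusion, which is exactly (c).

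For (a) $\Leftrightarrow$ (c), I would change variables to $u := x+y$, $v := x-y$ (a real-linear bijection of $\CC^M\oplus\CC^M$ onto itself) and apply the polarization identity $xx^*-yy^* = \tfrac{1}{2}(uv^* + vu^*)$ to rewrite
\[
|\ip{x}{\phi_n}|^2 - |\ip{y}{\phi_n}|^2 = \tr\bigl(\phi_n\phi_n^*(xx^*-yy^*)\bigr) = \Re\bigl(\overline{\phi_n^*v}\,\phi_n^*u\bigr) = \ip{v}{\phi_n\phi_n^*u}_\RR.
\]
Hence $\mathpzc{A}(x)=\mathpzc{A}(y)$ if and only if $v\in S(u)^\perp$. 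On the orbit side, $y = e^{i\theta}x$ with $e^{i\theta}\neq -1$ forces $v = \tfrac{1-e^{i\theta}}{1+e^{i\theta}}\,u$, and rationalizing shows $\tfrac{1-e^{i\theta}}{1+e^{i\theta}}\in i\RR$, so $v\in\spn_\RR\{iu\}$; the residual case $e^{i\theta}=-1$ corresponds to $u=0$ (i.e.\ $y=-x$), which is trivially in the orbit of $x$.

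Putting these together, injectivity of $\mathpzc{A}$ becomes the implication ``$u\neq 0$ and $v\in S(u)^\perp$ imply $v\in\spn_\RR\{iu\}$'', i.e.\ $S(u)^\perp \subseteq \spn_\RR\{iu\}$ for every $u\neq 0$. Taking real orthogonal complements and combining with the always-valid $S(u)\subseteq\spn_\RR\{iu\}^\perp$ yields $S(u)=\spn_\RR\{iu\}^\perp$, which is (c). I expect the main obstacle to be not any single deep step but the bookkeeping: the polarization identity, the one-line check $\tfrac{1-e^{i\theta}}{1+e^{i\theta}}\in i\RR$, and the handling of the edge case $u=0$ must all line up for the chain of equivalences to close cleanly.
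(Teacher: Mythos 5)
Your argument is correct and complete: the a priori inclusion $S(u)\subseteq\spn_\RR\{iu\}^\perp$ plus the dimension count settles (b)$\Leftrightarrow$(c), and the change of variables $u=x+y$, $v=x-y$ with the polarization identity correctly reduces injectivity of $\mathpzc{A}$ to $S(u)^\perp\subseteq\spn_\RR\{iu\}$ for every $u\neq 0$, which after taking real orthogonal complements is exactly (c). Note the paper states this theorem without proof, quoting it from \cite{BCMN}, and your route is the standard argument given there; in fact the two computations you rely on are precisely Lemmas \ref{Lem5.2} and \ref{Lem5.3} of this paper, so your proposal matches the intended approach.
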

For this section we adopt the notation $\langle a,b\rangle_{\RR}$ to denote $\Re\langle a,b\rangle$.
\begin{lemma} \label{Lem5.2} Given $\lbrace \phi_n\rbrace _{n=1}^N\subseteq \CC^M$ and any $u\in\CC^M$ then $\langle \phi_n\phi_n^*u, iu\rangle_\RR=0$
\end{lemma}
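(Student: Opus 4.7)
The plan is a direct sesquilinear computation; the lemma should fall out in one line. Since $\phi_n^* u$ is the scalar $\langle u,\phi_n\rangle$, I would first rewrite
\[ \phi_n\phi_n^* u = \langle u,\phi_n\rangle\,\phi_n.\]
Substituting this into $\langle \phi_n\phi_n^* u, iu\rangle$ and pulling the scalar $\langle u,\phi_n\rangle$ out of the first argument (linear) and the $i$ out of the second argument (conjugate-linear), I expect
\[ \langle \phi_n\phi_n^* u, iu\rangle \;=\; \langle u,\phi_n\rangle\,\overline{i}\,\overline{\langle u,\phi_n\rangle} \;=\; -i\,|\langle u,\phi_n\rangle|^2,\]
which is purely imaginary. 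Taking real parts then gives $\langle \phi_n\phi_n^* u, iu\rangle_{\RR} = 0$, as required.

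Conceptually, the lemma is just the statement that $iu$ is $\RR$-orthogonal to any complex scalar multiple of $\phi_n$ when the scalar is exactly $\langle u,\phi_n\rangle$: the structure of the rank-one operator $\phi_n\phi_n^*$ forces the pairing against $u$ to land on the imaginary axis, so its real part vanishes. There is no real obstacle beyond fixing notational conventions, namely that $\langle\cdot,\cdot\rangle$ is conjugate-linear in the second slot and that $\phi_n^* u = \langle u,\phi_n\rangle$; once these are in place the identity is immediate. In the next step the lemma will supply the easy inclusion $S(u)\subseteq \spn_{\RR}\{iu\}^\perp$ feeding into Theorem \ref{BCMN}(c).
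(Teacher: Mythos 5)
Your computation is correct and is essentially identical to the paper's proof: both rewrite $\phi_n\phi_n^*u=\langle u,\phi_n\rangle\phi_n$, pull out the scalars to get $-i|\langle u,\phi_n\rangle|^2$, and conclude by taking the real part. No issues.
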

\begin{proof} The following calculation gives the result almost immediately:
\begin{align*}
\langle \phi_n\phi_n^*u, iu\rangle_\RR=&\langle\langle u,\phi_n\rangle \phi_n, iu\rangle_\RR=\Re(-i\langle u, \phi_n\rangle\langle \phi_n,u\rangle)
 \\=&-\Re(i|\langle u,\phi_n\rangle|^2)=0.
\end{align*}
\end{proof}
\begin{lemma}\label{Lem5.3} Given $\lbrace \phi_n\rbrace _{n=1}^N\subseteq \CC^M$ and any $u,v\in\CC^M$ then for each $\phi_n$,
\[|\langle u+v,\phi_n\rangle|^2-|\langle u-v,\phi_n\rangle|^2=4\langle \phi_n\phi_n^*u,v\rangle_\RR.\]
\end{lemma}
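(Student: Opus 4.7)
The plan is to verify the identity by direct expansion. The left-hand side is a standard polarization-type difference, so I would expand $|\langle u+v,\phi_n\rangle|^2 = \langle u+v,\phi_n\rangle \overline{\langle u+v,\phi_n\rangle}$ using bilinearity of the inner product in the first slot and conjugate-linearity in the second. The ``diagonal'' terms $|\langle u,\phi_n\rangle|^2$ and $|\langle v,\phi_n\rangle|^2$ will appear with the same sign in both $|\langle u+v,\phi_n\rangle|^2$ and $|\langle u-v,\phi_n\rangle|^2$, so they cancel upon subtraction. What survives is
\[
2\langle u,\phi_n\rangle\langle \phi_n,v\rangle + 2\langle v,\phi_n\rangle\langle \phi_n,u\rangle,
\]
and these two terms are complex conjugates of each other (using $\langle v,\phi_n\rangle = \overline{\langle \phi_n,v\rangle}$), so their sum equals $4\Re\bigl(\langle u,\phi_n\rangle\langle \phi_n,v\rangle\bigr)$.

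Next I would compute the right-hand side the same way as in the preceding Lemma \ref{Lem5.2}: since $\phi_n^* u = \langle u,\phi_n\rangle$ is a scalar, we have $\phi_n\phi_n^* u = \langle u,\phi_n\rangle\phi_n$, whence
\[
\langle \phi_n\phi_n^* u, v\rangle_\RR = \Re\bigl(\langle u,\phi_n\rangle\langle \phi_n,v\rangle\bigr).
\]
Multiplying by $4$ matches the expression obtained for the left-hand side, completing the proof.

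There is no real obstacle here; the statement is a straightforward polarization identity. The only mild subtlety is keeping track of which slot is conjugate-linear so that the cross terms pair up correctly as complex conjugates, which is what produces the factor of $4$ and the real part. I would present the two expansions in a single \textbf{align*} block and conclude by matching terms.
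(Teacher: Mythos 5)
Your proposal is correct and is essentially the paper's own argument: expand the two squared moduli, cancel the diagonal terms, and identify the surviving cross term $4\Re\bigl(\langle u,\phi_n\rangle\overline{\langle v,\phi_n\rangle}\bigr)$ with $4\langle \phi_n\phi_n^*u,v\rangle_\RR$ via $\phi_n\phi_n^*u=\langle u,\phi_n\rangle\phi_n$. No issues.
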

\begin{proof}
Consider the following
\begin{equation}\label{eq1}
|\langle u+v,\phi_n\rangle|^2=|\langle u,\phi_n\rangle|^2+2\Re(\langle u,\phi_n\rangle\overline{\langle v,\phi_n\rangle})+|\langle v,\phi_n\rangle|^2
\end{equation}
and
\begin{equation}\label{eq2}
|\langle u-v,\phi_n\rangle|^2=|\langle u,\phi_n\rangle|^2-2\Re(\langle u,\phi_n\rangle\overline{\langle v,\phi_n\rangle})+|\langle v,\phi_n\rangle|^2.
\end{equation}
Then subtracting (\ref{eq2}) from (\ref{eq1}) we obtain
\[|\langle u+v,\phi_n\rangle|^2-|\langle u-v,\phi_n\rangle|^2=4\Re(\langle u,\phi_n\rangle\overline{\langle v, \phi_n\rangle})=4\langle \phi_n\phi_n^*u,v\rangle_\RR\]
\end{proof}
\begin{corollary} If $\lbrace \phi_n\rbrace_{n=1}^N$ does phaseless reconstruction and $\langle\phi_n\phi_n^* u, v\rangle_\RR=0$ for each $n$ then $u+v=\omega (u-v)$ for $|\omega|=1$ and thus $v=\frac{2\Im (\omega)}{|1+\omega|^2}u$.
\end{corollary}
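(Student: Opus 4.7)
The plan is to feed the two hypotheses into Lemma \ref{Lem5.3} and then exploit phaseless reconstruction on a carefully chosen pair of test vectors. Concretely, I would proceed as follows.

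First I would apply Lemma \ref{Lem5.3} directly. The hypothesis $\langle \phi_n\phi_n^* u, v\rangle_\RR = 0$ for every $n$ is exactly the right-hand side of the identity in that lemma (up to the factor of $4$), so it yields
\[
|\langle u+v,\phi_n\rangle|^2 \;=\; |\langle u-v,\phi_n\rangle|^2 \qquad \text{for all } n=1,\dots,N.
\]
This is the magnitude-equality hypothesis in the definition of phaseless reconstruction, applied to the pair $(u+v, u-v)$.

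Next, since $\{\phi_n\}_{n=1}^N$ does phaseless reconstruction, there must exist $\omega\in\CC$ with $|\omega|=1$ such that $u+v=\omega(u-v)$. This is the first assertion of the corollary and requires no further work beyond invoking the definition.

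The remaining step is an algebraic manipulation. Rearranging $u+v=\omega(u-v)$ gives $(1+\omega)v=(\omega-1)u$. Provided $\omega\neq -1$, I would solve for $v$ and rationalize the denominator by multiplying numerator and denominator by $\overline{1+\omega}=1+\bar\omega$:
\[
v \;=\; \frac{\omega-1}{1+\omega}\,u \;=\; \frac{(\omega-1)(1+\bar\omega)}{|1+\omega|^2}\,u \;=\; \frac{\omega-\bar\omega}{|1+\omega|^2}\,u \;=\; \frac{2i\,\Im(\omega)}{|1+\omega|^2}\,u,
\]
using $\omega\bar\omega=1$ in the middle step. This recovers the stated formula for $v$ in terms of $u$. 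The only edge case is $\omega=-1$, in which case $(1+\omega)v=(\omega-1)u$ forces $u=0$, and the conclusion is vacuous (or the formula is to be interpreted in the limit).

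There is no real obstacle here; the entire content of the statement is packaged in Lemma \ref{Lem5.3} together with the definition of phaseless reconstruction, and the only thing to be careful about is the bookkeeping with $\omega$ and $\bar\omega$ when rationalizing — and, as a minor caveat, noting that the $\omega=-1$ degeneracy corresponds to $u=0$ so it does not produce a genuine counterexample to the formula.
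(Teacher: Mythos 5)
Your proof is correct and follows essentially the same route as the paper: the paper's own argument consists of exactly the algebraic rearrangement you perform (the appeal to Lemma \ref{Lem5.3} and to the definition of phaseless reconstruction being left implicit in the placement of the corollary), and your handling of the $\omega=-1$ degeneracy is a harmless extra. One remark worth keeping: your formula $v=\frac{2i\,\Im(\omega)}{|1+\omega|^2}\,u$ is actually the correct one --- the factor $i$ is missing from the statement and from the paper's final equality, a typo that matters later since the subsequent theorem needs $v$ to be a real multiple of $iu$.
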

\begin{proof}
If $u+v=\omega u-\omega v$ then $v=\frac{\omega - 1}{\omega +1}u=-\frac{(1-\omega)(1+\overline{\omega})}{|1+\omega|^2}u=\frac{2\Im (\omega)}{|1+\omega|^2}u.$
\end{proof}
\begin{lemma} Given any $u$, let $v=\alpha iu$ for $\alpha\in\RR$ and let $\omega=\frac{1+\alpha i}{1-\alpha i}$ then $|\omega|=1$ and $u+v=u(1+\alpha i )=\frac{1+\alpha i }{1-\alpha i}(u-\alpha i u)=\omega (u-v)$.
\end{lemma}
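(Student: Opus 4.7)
The plan is to treat this as a direct verification with two independent subclaims: the modulus statement $|\omega|=1$, and the algebraic identity $u+v=\omega(u-v)$.

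For $|\omega|=1$, I would first observe that $1-\alpha i = \overline{1+\alpha i}$ since $\alpha\in\RR$. Hence $|1-\alpha i|=|1+\alpha i|=\sqrt{1+\alpha^2}$, and therefore
\[
|\omega| \;=\; \frac{|1+\alpha i|}{|1-\alpha i|} \;=\; 1.
\]
Equivalently, one can compute $\omega\overline{\omega}=\frac{(1+\alpha i)(1-\alpha i)}{(1-\alpha i)(1+\alpha i)}=1$.

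For the identity, I would simply substitute $v=\alpha i u$ and factor. Writing $u+v = u+\alpha i u = u(1+\alpha i)$ and $u-v = u-\alpha i u = u(1-\alpha i)$, the right side becomes
\[
\omega(u-v) \;=\; \frac{1+\alpha i}{1-\alpha i}\cdot u(1-\alpha i) \;=\; u(1+\alpha i) \;=\; u+v,
\]
where the cancellation is legitimate because $1-\alpha i\neq 0$ (its modulus is $\sqrt{1+\alpha^2}\geq 1$). This chain reproduces the displayed string of equalities in the statement.

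There is essentially no obstacle here; the lemma is a one-line scalar manipulation whose role is to exhibit an explicit pair $(\omega,v)$ realizing the conclusion of the preceding corollary, namely a unimodular $\omega$ and a vector $v$ parallel to $iu$ for which $u+v=\omega(u-v)$. The only thing worth flagging in the write-up is that $\alpha\in\RR$ is used in both parts (for the conjugation identity giving $|\omega|=1$ and implicitly for $1-\alpha i$ to be nonzero), so I would state that hypothesis clearly at the start of the proof.
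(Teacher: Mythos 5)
Your argument is correct and is essentially the same computation the paper intends: the lemma's own statement already contains the chain $u+v=u(1+\alpha i)=\frac{1+\alpha i}{1-\alpha i}(u-\alpha iu)=\omega(u-v)$, and $|\omega|=1$ follows because $1-\alpha i=\overline{1+\alpha i}$ for real $\alpha$. Your added remarks (nonvanishing of $1-\alpha i$ and the explicit use of $\alpha\in\RR$) are sensible bookkeeping but do not change the approach.
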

\begin{lemma} If $x-y\neq 0$ then $\langle \phi\phi^*(x-y),x+y\rangle _\RR=0$.
\end{lemma}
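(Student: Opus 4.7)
The plan is to derive the identity by a judicious substitution in the polarization-type formula already proved in Lemma 5.3. That lemma gives, for arbitrary $u,v\in\CC^M$,
\[ |\langle u+v,\phi\rangle|^2-|\langle u-v,\phi\rangle|^2 = 4\langle \phi\phi^*u,v\rangle_\RR. \]
The key move is to choose $u:=x-y$ and $v:=x+y$, so that $u+v=2x$ and $u-v=-2y$. Substituting, the left-hand side collapses to $4|\langle x,\phi\rangle|^2-4|\langle y,\phi\rangle|^2$, and the right-hand side becomes $4\langle \phi\phi^*(x-y),x+y\rangle_\RR$. Dividing by $4$,
\[ |\langle x,\phi\rangle|^2-|\langle y,\phi\rangle|^2 = \langle \phi\phi^*(x-y),x+y\rangle_\RR. \]

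The standing assumption in this section (the one driving the phaseless reconstruction argument) is that $x$ and $y$ are two candidate signals with $|\langle x,\phi_n\rangle|=|\langle y,\phi_n\rangle|$ for every $\phi_n$ in the frame. Under that hypothesis the left-hand side of the displayed equation vanishes for each $\phi=\phi_n$, which is exactly the desired conclusion. The non-triviality condition $x-y\neq 0$ is there only to exclude the tautological case $x=y$, where $\phi\phi^*(x-y)$ is already zero and nothing needs to be proved.

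There is essentially no technical obstacle: once the substitution $u=x-y$, $v=x+y$ is spotted, the rest is a single line of algebra. The only subtle point is recognizing which substitution into Lemma 5.3 produces the combination $\phi\phi^*(x-y)$ paired against $x+y$ on the right, as opposed to the more obvious choice $u=x$, $v=y$ (which would give $\phi\phi^*x$ paired against $y$ and would not directly yield the claim). So the bulk of the work is really just identifying the correct identity to apply; no new idea beyond Lemma 5.3 is needed.
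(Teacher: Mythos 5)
Your argument is correct, and it reaches the same essential cancellation as the paper by a slightly different route. The paper proves the lemma by directly expanding $\Re\bigl((x+y)^*\phi\phi^*(x-y)\bigr)=\Re\bigl(|\phi^*x|^2-x^*\phi\phi^*y+y^*\phi\phi^*x-|\phi^*y|^2\bigr)$ and observing that the cross terms have cancelling real parts, while the diagonal terms $|\phi^*x|^2-|\phi^*y|^2$ are dropped; you instead substitute $u=x-y$, $v=x+y$ into Lemma \ref{Lem5.3} to get the identity $\langle\phi\phi^*(x-y),x+y\rangle_\RR=|\langle x,\phi\rangle|^2-|\langle y,\phi\rangle|^2$ and then invoke the equal-measurement hypothesis. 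The two computations are algebraically the same thing, but your version has a genuine advantage in transparency: it isolates exactly which hypothesis makes the lemma true. Note that the lemma as literally stated (with only $x-y\neq 0$) is false in general --- take $y=0$ and $x=\phi\neq 0$, which gives $\langle\phi\phi^*x,x\rangle_\RR=\|\phi\|^4\neq 0$ --- so the statement only holds under the standing assumption $|\langle x,\phi_n\rangle|=|\langle y,\phi_n\rangle|$ for the measurement vectors in play. The paper's proof uses this silently when the terms $|\phi^*x|^2-|\phi^*y|^2$ vanish between its second and third displayed lines; you state it explicitly, which is the more honest formulation. Your remark that $x-y\neq 0$ plays no real role is also accurate.
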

\begin{proof} Consider the following calculation,
\begin{align*}
\langle \phi\phi^*(x-y),x+y\rangle _\RR&=\Re((x+y)^*\phi\phi^*(x-y))\\
&=\Re(|\phi^*x|^2-x^*\phi\phi^*y+y^*\phi\phi^*x-|\phi y|^2)\\
 &=\Re(-x^*\phi\phi^*y+x^*\phi\phi^*y)=0.
\end{align*}
\end{proof}

\begin{lemma} \label{Lem5.11} Let $a,b\in\CC$ such that $|a|+|b|>0$. If $$\arg (a+b)=\arg (e^{i\theta} (a-b)),$$  then
\[\tan \theta=\frac{2 \Im (\bar{a}b)}{|a|^2-|b|^2}\]
for $|a|\neq |b|$ and $\theta=\pi/2$ otherwise.
\end{lemma}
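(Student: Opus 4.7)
The plan is to convert the stated equality of arguments into a single algebraic vanishing condition and then read off $\tan\theta$ from a real/imaginary part computation.

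First I would observe that two nonzero complex numbers $z,w$ satisfy $\arg z=\arg w$ iff $w\bar z$ is a nonnegative real, i.e.\ $\Im(w\bar z)=0$. Applying this with $z=a+b$ and $w=e^{i\theta}(a-b)$, the hypothesis becomes
\[
\Im\!\left(e^{i\theta}(a-b)\overline{(a+b)}\right)=0.
\]
Next I would expand the bilinear factor:
\[
(a-b)\overline{(a+b)}=(a-b)(\bar a+\bar b)=|a|^{2}-|b|^{2}+(a\bar b-\bar a b)=\bigl(|a|^{2}-|b|^{2}\bigr)-2i\,\Im(\bar a b),
\]
using $a\bar b-\bar a b=-2i\,\Im(\bar a b)$.

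Then multiplying by $e^{i\theta}=\cos\theta+i\sin\theta$ and taking imaginary parts gives
\[
\bigl(|a|^{2}-|b|^{2}\bigr)\sin\theta-2\,\Im(\bar a b)\cos\theta=0.
\]
If $|a|\neq|b|$, dividing by $\cos\theta\,\bigl(|a|^{2}-|b|^{2}\bigr)$ yields the claimed formula $\tan\theta=\frac{2\,\Im(\bar a b)}{|a|^{2}-|b|^{2}}$. If $|a|=|b|$ and $\Im(\bar a b)\neq 0$, the identity forces $\cos\theta=0$, so $\theta=\pi/2$ (mod $\pi$).

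There is no substantive obstacle; the only mildly delicate point is the boundary case where one of $a\pm b$ vanishes, so its argument is undefined. The assumption $|a|+|b|>0$ excludes $a=b=0$; and in the remaining degenerate situations $a=b$ or $a=-b$ one has $|a|=|b|$ together with $\Im(\bar a b)=0$, so both sides of the identity degenerate consistently and the hypothesis is vacuous. Thus the computation above is the whole proof, and the main work is simply the expansion of $(a-b)\overline{(a+b)}$ and the extraction of its imaginary part after rotation by $e^{i\theta}$.
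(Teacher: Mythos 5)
Your proof is correct. The paper actually states Lemma \ref{Lem5.11} with no proof at all, so there is nothing to compare against; your computation supplies exactly the missing argument, and it is the natural one: translate $\arg(a+b)=\arg\bigl(e^{i\theta}(a-b)\bigr)$ into $\Im\bigl(e^{i\theta}(a-b)\overline{(a+b)}\bigr)=0$, expand $(a-b)\overline{(a+b)}=(|a|^{2}-|b|^{2})-2i\,\Im(\bar a b)$, and take the imaginary part after rotating by $e^{i\theta}$ to get $(|a|^{2}-|b|^{2})\sin\theta=2\,\Im(\bar a b)\cos\theta$. Two small points worth making explicit: when $|a|\neq|b|$ you may divide by $\cos\theta$ because $\cos\theta=0$ would force $|a|^{2}-|b|^{2}=0$ in that identity, a contradiction; and in the case $|a|=|b|$ the conclusion is really $\theta=\pi/2$ modulo $\pi$ (as you note), with the degenerate subcases $b=\pm a$ excluded since then one of $a\pm b$ vanishes and the hypothesis on arguments cannot even be formulated. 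With those remarks your write-up is complete and slightly more careful than the paper, which asserts the lemma without justification.
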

\begin{theorem}
Phase retrieval implies phaseless construction in the complex case.
\end{theorem}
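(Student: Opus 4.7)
The plan is to reduce the statement to condition (c) of Theorem~\ref{BCMN}: that $S(u) = \spn_\RR\{iu\}^\perp$ for every nonzero $u \in \CC^M$. By that theorem this is equivalent to phaseless reconstruction. Concretely, given $|\langle x,\phi_n\rangle| = |\langle y,\phi_n\rangle|$ for all $n$, set $u = x+y$ and $v = x-y$; Lemma~\ref{Lem5.3} (with $u/2, v/2$ in place of its $u,v$) yields $\langle\phi_n\phi_n^*u,v\rangle_\RR = 0$ for every $n$, so $v\in S(u)^{\perp_\RR}$. If one can conclude $v = \alpha iu$ for some $\alpha\in\RR$, then the lemma preceding Lemma~\ref{Lem5.11} gives $u+v = \omega(u-v)$ with $\omega = (1+\alpha i)/(1-\alpha i)$ unimodular, which rearranges to $x = \omega y$, completing the proof.

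To show $v \in \spn_\RR\{iu\}$, I apply phase retrieval to the whole one-parameter family of rescalings. Since $\lambda v \in S(u)^\perp$ for every real $\lambda$, Lemma~\ref{Lem5.3} read backwards gives $|\langle x_\lambda,\phi_n\rangle| = |\langle y_\lambda,\phi_n\rangle|$ for all $n$, where $x_\lambda = (u+\lambda v)/2$ and $y_\lambda = (u-\lambda v)/2$. Phase retrieval produces a unimodular $\theta(\lambda)$ with $x_\lambda$ and $\theta(\lambda)y_\lambda$ sharing phases. Applying Lemma~\ref{Lem5.11} coordinatewise, with $a = (x_{\lambda,j}+y_{\lambda,j})/2 = u_j/2$ and $b = (x_{\lambda,j}-y_{\lambda,j})/2 = \lambda v_j/2$, yields
\[
\tan(\arg\theta(\lambda)) \;=\; \frac{2\lambda\,\Im(\bar u_j v_j)}{|u_j|^2 - \lambda^2|v_j|^2}
\]
at every coordinate $j$ where the denominator is nonzero. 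Matching the left-hand side (independent of $j$) at two coordinates and reading off coefficients of $\lambda^0$ and $\lambda^2$ forces both $\Im(\bar u_j v_j)/|u_j|^2$ and $\Im(\bar u_j v_j)/|v_j|^2$ to be constant in $j$. A separate short argument at coordinates with $u_j=0$ (a nonzero $v_j$ there forces $\theta(\lambda)=-1$, inconsistent with the value forced where $u_j\neq 0$) shows $\text{supp}(v)\subset\text{supp}(u)$.

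Write $v_j = \beta_j u_j$ on $\text{supp}(u)$. The two proportionality conditions force $\Im\beta_j$ and $|\beta_j|$ to be constants, so $\beta_j\in\{p+iq,\,-p+iq\}$ for some fixed $p\geq 0$ and $q\in\RR$. If $p=0$, then $\beta_j \equiv iq$ and $v = iq\,u$, giving $v = \alpha iu$ with $\alpha = q$, as required. The main obstacle is the case $p>0$: the support of $u$ splits into two nonempty sets $J_+,J_-$ on which $\beta_j = p+iq$ and $-p+iq$ respectively. Defining $\tilde u_j := u_j$ on $J_+$, $\tilde u_j := -u_j$ on $J_-$, and $\tilde u_j := 0$ elsewhere, one checks $p\tilde u = v - iq\,u$, so (using $iu\in S(u)^\perp$ from Lemma~\ref{Lem5.2}) $\tilde u\in S(u)^\perp$. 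Lemma~\ref{Lem5.3} applied to $(u,\tilde u)$ then gives $|\langle(u+\tilde u)/2,\phi_n\rangle| = |\langle(u-\tilde u)/2,\phi_n\rangle|$ for all $n$; but these two vectors have disjoint nonempty supports ($J_+$ and $J_-$), and by Observation~\ref{o2} no unimodular $\theta$ can make them share phases, contradicting phase retrieval. Hence $p=0$, completing the proof.
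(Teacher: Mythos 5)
Your overall route parallels the paper's own: reduce, via Lemma~\ref{Lem5.3}, to showing that any $v$ with $\langle\phi_n\phi_n^*u,v\rangle_\RR=0$ for all $n$ must lie in $\spn_\RR\{iu\}$, and then pin $v$ down by applying phase retrieval to rescaled pairs together with Lemma~\ref{Lem5.11} (the paper uses the two particular pairs $u\pm v$ and $2u\pm v$ where you use the family $u\pm\lambda v$, and it ends with a decomposition $v=w+ciu$ where you end with the sign-flipped vector $\tilde u$). However, there is a genuine gap at the step ``the two proportionality conditions force both $\Im\beta_j$ and $|\beta_j|$ to be constants.'' Writing $m_j=\Im(\overline{u_j}v_j)$, matching $\tan(\arg\theta(\lambda))$ at two coordinates and comparing coefficients of $\lambda^0$ and $\lambda^2$ gives $m_j|u_k|^2=m_k|u_j|^2$ and $m_j|v_k|^2=m_k|v_j|^2$. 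The first does make $\Im\beta_j\equiv q$ constant, and when $q\neq 0$ the second forces $|\beta_j|$ constant; but when $q=0$ (i.e.\ every $v_j$ is a \emph{real} multiple of $u_j$) both identities read $0=0$ and give no control on $|\beta_j|$ at all. Your dichotomy $p=0$ versus $p>0$, and the construction of $J_\pm$ and $\tilde u$, presuppose that $|\beta_j|$ is constant, so the case $v_j=\beta_j u_j$ with varying (or even constant but nonzero) real $\beta_j$ is not excluded by anything you wrote, and nothing established earlier rules it out.

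The omission is repairable with tools you already invoke: if all $\beta_j$ are real and some $\beta_{j_0}\neq 0$, take $\lambda=1/\beta_{j_0}$; then $(u-\lambda v)_{j_0}=0$ while $(u+\lambda v)_{j_0}=2u_{j_0}\neq 0$, so either $u-\lambda v=0$, in which case $|\langle u+\lambda v,\phi_n\rangle|=|\langle 2u,\phi_n\rangle|=0$ for all $n$ contradicts the spanning forced by Observation~\ref{o1}, or phase retrieval applied to the pair $u\pm\lambda v$ would make two vectors share phases although one has a zero coordinate where the other does not, contradicting Observation~\ref{o2}; hence $\beta_j\equiv 0$ and $v=0$, which is harmless. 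Two smaller points in the $p>0$ case: you assert that $J_+$ and $J_-$ are both nonempty, which nothing guarantees --- if one is empty the disjoint-support argument does not apply, but then $u\mp\tilde u=0$ and $|\langle 2u,\phi_n\rangle|=0$ for all $n$ again contradicts Observation~\ref{o1}, so state that alternative. Also, the tangent-matching and the support argument should be restricted to the cofinitely many $\lambda$ avoiding the zeros of the denominators $|u_j|^2-\lambda^2|v_j|^2$ (and, for the $u_j=0$ coordinates, small $\lambda\neq 0$ so that $\theta(\lambda)$ is forced near $1$ at coordinates where $u_j\neq 0$); this is easily said but currently implicit.
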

\begin{proof}
Suppose $\Phi=\lbrace \phi_n\rbrace _{n=1}^N\subseteq \CC^M$ does phase retrieval. Let $u,v$ be non-zero vectors in $\CC^M$ such that $\langle \phi_n\phi^*_n u,v\rangle_\RR=0$ for all $n$. Note that Lemma \ref{Lem5.3} ensures that $|\langle u+v,\phi_n\rangle |^2=|\langle u-v,\phi_n\rangle |^2$ for each $n$. To apply the results in Theorem \ref{Thm5.1}, we must show $v=\lambda i u$ for some $\lambda\in\RR$.  For simplicity, denote $u=(u_1, u_2,...)$ and $v=(v_1,v_2,...)$. Consider the following cases:

\vskip12pt
\noindent {\bf Case 1}:  $u_jv_j=0$ for all $1\leq j\leq N$.
\vskip12pt
Without loss of generality, suppose $u=(e^{i\alpha_1}, 0, ....)$ and $v=(0,e^{i\beta_2},...)$ for some $\alpha_1,\beta_1\in\mathbb{R}$. Since $\Phi$ does phase retrieval, we have that $u+v$ has the same phase as $e^{i\gamma}(u-v)$, with some real constant $\gamma$.  In particular $\arg (u_1+v_1)=\arg  (e^{i\gamma}(u_1-v_1))$, i.e. $\arg (e^{i\alpha_1})=\arg  (e^{i\gamma}e^{i\alpha_1})$. Similarly $\arg (u_2+v_2)=\arg  (e^{i\gamma}(u_2-v_2))$, i.e. $\arg (e^{i\beta_2})=\arg  (-e^{i\gamma}e^{i\beta_2})$. However the first condition implies $\gamma=0$ and the second gives $\gamma=\pi$, a contradiction.
\vskip12pt
\noindent {\bf Case 1}:    $u_jv_j\neq 0$  for some $1\leq j\leq N$.
\vskip12pt
Without loss of generality, we can assume $u_1v_1\neq 0$ and by multiplying by the appropriate constants we may also assume $|u_1|=|v_1|=r_1>0$. Then by Lemma \ref{Lem5.11}, for each $1\leq j\leq N$ we have that
\[\tan (\gamma)=\frac{2\Im (\overline{u_j}v_j)}{|u_j|^2-|v_j|^2}.\]
By assumption $|u_1|=|v_1|$, therefore $\gamma=\pi/2$ and hence $|u_j|=|v_j|$ for all $1\leq j\leq N$.  So we have shown that
$$u=(r_1e^{i\alpha_1},r_2e^{i\alpha_2},\ldots,r_Ne^{i\alpha_N})$$
and 
$$v=(r_1e^{i\beta_1},r_2e^{i\beta_2},\ldots,r_Ne^{i\beta_N}).$$
 Now we claim that $\sin (\beta_j-\alpha_j)=c$ for all $j$. To see this note that since $\arg(2u_j+v_j)=\arg (e^{i\theta} (2u_j-v_j))$  for  all $j$ and fixed $\theta$, then by Lemma \ref{Lem5.11}  we see that
\[c=\tan \theta=\frac{4\Im (\overline{u_j}v_j)}{3r_j^2}=\frac{4}{3}\sin(\beta_j-\alpha_j)\;\;\;\; \forall 1\leq j\leq N.\]
For each $j$, set $a_j=\cos (\beta_j-\alpha_j)=\pm\sqrt{1-c^2}$. We can express $v=w+ciu$ where
$$w=(a_1r_1e^{i\alpha_1},a_2r_2e^{i\alpha_2},\ldots,a_Nr_Ne^{i\alpha_N}).$$
Now we rewrite $$v=\left(r_1e^{i\alpha_1}e^{i(\beta_1-\alpha_1)},r_2e^{i\alpha_2}e^{i(\beta_2-\alpha_2)},
\ldots,r_Me^{i\alpha_M}e^{i(\beta_M-\alpha_M)}\right)$$
 and each $e^{i(\beta_j-\alpha_j)}=\cos (\beta_j-\alpha_j)+i\sin(\beta_j-\alpha_j)=a_j+ic$. We must show $w=0$. Recall that for every $n$ we have 
 $$0=\langle \phi_n\phi_n^*u,w+ciu\rangle_\RR=
 \langle \phi_n\phi_n^*u,w\rangle_\RR+\langle \phi_n\phi^*_nu, ciu\rangle_\RR.$$
 By Lemma \ref{Lem5.2} we see that $\langle \phi_n\phi_n^*u,w\rangle_\RR=0$ for all $n$. Note that $w=0$ if and only if $a_j=0$ for all $j$. This is clear since that if $a_{1}\neq 0$ then the first component of $a_1u+w$ is non-zero but the the first component of $a_1u-w$ is $0$ (assuming $u_1\neq 0$) which contradicts to $w=0$.

\end{proof}

\section{Weak Phase Retrieval}

We weaken the notion of phase retrieval.

\begin{definition}
Two vectors in $\HH_m$,
$x=(a_1,a_2,\ldots,a_m)$ and $y=(b_1,b_2,\ldots,b_m)$
{\bf weakly} have the same phase if
there is a $|\theta|=1$ so that
\[ \mbox{phase}(a_i)= \theta \mbox{phase}(b_i), \mbox{ for all }i=1,2,\ldots,m, \mbox{ for which } a_i\not= 0 \not= b_i.\]
In the real case, if $\theta=1$ we say $x,y$ {\bf weakly have the same signs} and if $\theta=-1$ they {\bf weakly have opposite signs.}
\end{definition}

\begin{definition}
A family of vectors $\{\phi_i\}_{i=1}^n$ in $\HH_m$
does {\bf weak phase retrieval} if for any
$x=(a_1,a_2,\ldots,a_m)$ and $y=(b_1,b_2,\ldots,b_m)$
in $\HH_m$, with
\[ |\langle x,\phi_i\rangle|=|\langle y,\phi_i\rangle|,
\mbox{ for all }i=1,2,\ldots,m,\]
there is a $|\theta|=1$ so that
\[ \mbox{phase}(a_i)= \theta \mbox{phase}(b_i), \mbox{ for all }i=1,2,\ldots,m, \mbox{ for which } a_i\not= 0 \not= b_i.\]
\end{definition}

The difference with {\it phase retrieval} is that
we are now allowing $a_i =0$ and $b_i \not= 0$.

An example of weak phase retrieval which does not yield
phase retrieval in $\RR^m$ is given by:

Let $\Phi=\{\phi_i\}_{i=1}^{m+1}$ be the column vectors
of the matrix:
\[A=\begin{bmatrix}

1&-1&1\cdots&1&1\\
1&1&-1\cdots &1&1\\
\vdots&\vdots & \vdots& \vdots & \vdots \\
1&1&1 \cdots &1&1
\end{bmatrix}_{m \times (m+1)}\]

Then for any $x=(a_1,a_2,\ldots,a_m)$ and $y=(b_1,b_2,\ldots, b_m)$, if

\[ |\langle x,\phi_i\rangle|^2 = |\langle y,\phi_i\rangle|^2,\]
then by expanding out and subtracting rows from each other, we will find that:
\[ a_ia_j=b_ib_j,\mbox{ for all }i\not= j.\]

This family of (m+1)-vectors in $\RR_m$ does weak phase
retrieval.  To see this, we need a proposition.  Notice
that there are too few vectors here to do phaseless
reconstruction.

\begin{proposition}\label{prop1}
Let $x=(a_1,a_2,\ldots,a_m)$ and $y=(b_1,b_2,\ldots,b_m)$
in $\RR_m$.  The following are equivalent:
\begin{enumerate}
\item We have
\[sgn\ (a_ia_j)=sgn\ (b_ib_j),\mbox{ for all }1\le i\not= j \le m.\]
\item Either $x,y$ have weakly the same signs or they have weakly opposite
signs
\end{enumerate}
\end{proposition}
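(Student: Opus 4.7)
The plan is to prove both directions by fixing a pivot coordinate in the common support of $x$ and $y$ and arguing multiplicatively with signs.

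For $(1)\Rightarrow(2)$, I would first dispose of the trivial case in which there is no index $i$ with $a_i\neq 0\neq b_i$; then condition (2) is vacuously satisfied by either choice $\theta=\pm 1$. Otherwise, fix such an $i_0$ and set $\theta:=\sgn(a_{i_0})\sgn(b_{i_0})\in\{-1,+1\}$. For any other $j$ with $a_j\neq 0\neq b_j$, hypothesis (1) applied to the pair $(i_0,j)$ becomes $\sgn(a_{i_0})\sgn(a_j)=\sgn(b_{i_0})\sgn(b_j)$, and multiplying both sides by $\sgn(a_{i_0})$ (which squares to $1$) yields $\sgn(a_j)=\theta\,\sgn(b_j)$. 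Thus $\theta=+1$ delivers weakly the same signs and $\theta=-1$ delivers weakly opposite signs.

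For $(2)\Rightarrow(1)$, I may assume without loss of generality that $\theta=+1$, since replacing $y$ by $-y$ leaves the products $b_ib_j$ unchanged and hence does not affect condition (1). Under this assumption $\sgn(a_i)=\sgn(b_i)$ at every coordinate of the common support, and sign multiplicativity gives $\sgn(a_ia_j)=\sgn(a_i)\sgn(a_j)=\sgn(b_i)\sgn(b_j)=\sgn(b_ib_j)$ whenever all of $a_i,a_j,b_i,b_j$ are nonzero. For a pair $(i,j)$ in which some entry vanishes, the task reduces to checking that both $a_ia_j$ and $b_ib_j$ are zero, which makes the desired equality trivial.

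The step I expect to be the most delicate is handling coordinates at which exactly one of $a_i,b_i$ vanishes, because the literal definition of ``weakly same/opposite signs'' imposes no constraint on such coordinates. In the forward direction this causes no trouble, as the whole argument uses only coordinates in the common support. In the reverse direction one must read ``weakly same signs'' in the natural way that also forces $a_i=0\Leftrightarrow b_i=0$ on the coordinates that enter the sign-product test; with that reading the leftover pairs $(i,j)$ make both $a_ia_j$ and $b_ib_j$ vanish simultaneously, and the equivalence closes.
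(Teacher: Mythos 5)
Your $(1)\Rightarrow(2)$ is essentially the paper's argument: the paper sets $K=\{i: a_i\neq 0\neq b_i\}$, takes the pivot $i_0=\min K$, and splits into the two cases $\sgn a_{i_0}=\pm\,\sgn b_{i_0}$, which is exactly your $\theta=\sgn(a_{i_0})\sgn(b_{i_0})$; the empty-common-support case is vacuous in both treatments. The difference lies in the converse, which the paper disposes of with ``this is obvious,'' and here the delicacy you flag is real: with the usual convention $\sgn(0)=0$, the implication $(2)\Rightarrow(1)$ as literally stated is false. Take $x=(1,0)$ and $y=(1,1)$ in $\RR_2$: they weakly have the same signs, since the only coordinate where both entries are nonzero is the first, yet $\sgn(a_1a_2)=0\neq 1=\sgn(b_1b_2)$. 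So your proof of the converse is valid only after your added reading that the supports agree ($a_i=0\Leftrightarrow b_i=0$), or alternatively under a convention that the equality in (1) is required only when the products are nonzero; neither is stated in the paper, and the paper's proof does not address the point at all. Since only the direction $(1)\Rightarrow(2)$ is used afterwards (in Theorem \ref{thm1}), the application is unaffected, but as a reading of the proposition itself the caveat you isolated is exactly where the statement needs repair, not a defect of your argument.
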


\begin{proof}
$(1) \Rightarrow (2)$:
Let
\[ I = \{1\le i \le m:a_i=0\}\mbox{ and }
J=\{1\le i \le n:b_i=0\}.\]
Let
\[ K=[m]\setminus (I \cup J).\]
So $i\in K$ if and only if $a_i \not= 0 \not= b_i$.
Let $i_0 = min\ K$.  We examine two cases:
\vskip12pt
\noindent {\bf Case 1}:  $sgn\ a_{i_0} = sgn\ b_{i_0}$.
\vskip12pt
For any $i_0 \not= k \in K$, $a_{i_0}a_k=b_{i_0}b_k$,
implies $sgn\ a_k = sgn\ b_k$.  Since all other coordinates
of either $x$ or $y$ are zero, it follows that $x,y$
weakly have
the same signs.

\vskip12pt
\noindent {\bf Case 2}:  $sgn\ a_{i_0}=-sgn\ b_{i_0}$.
\vskip12pt
For any $i_o \not= k \in K$, $a_{i_0}a_k = b_{i_0}b_k$
implies $sgn\ a_k = - sgn\ b_k$.  Again, since all other
coordinates of either $x$ or $y$ are zero, it follows
that $x,y$ weakly have opposite signs.
\vskip12pt
$(2)\Rightarrow (1)$:  This is obvious.
\end{proof}

\begin{theorem}\label{thm1}
Let $x=(a_1,a_2,\ldots,a_m)$ and $y=(b_1,b_2,\ldots,b_m)$
in $\RR_m$ and assume we have:
\[ a_ia_j=b_ib_j,\mbox{ for all }1\le i\not= j \le m.\]
Then:
\begin{enumerate}
\item Either $x,y$ have weakly the same signs or they have weakly the opposite
signs.

\item One of the following holds:

(i)  There is a $1\le i \le m$ so that $a_i=0$ and $b_j=0$ for all $j\not= i$.

(ii)  There is a $1\le i \le m$ so that $b_i=0$ and $a_j=0$
for all $j\not= i$.

(iii)  If (i) and (ii) fail and
\[ I = \{1\le i \le m:a_i\not= 0 \not= b_i\},\]
then the following hold:

\ \ \ \ (a)  If $i\in I^c$ then $a_i=b_i=0$.

\ \ \ \ (b)  For all $i\in I$, $|a_i|=|b_i|$.
\end{enumerate}
\end{theorem}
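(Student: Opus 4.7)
The plan is to deduce (1) as an immediate consequence of Proposition \ref{prop1}, and to obtain (2) by a case analysis on the zero patterns of $x$ and $y$, where the key algebraic input is a ``three-equation'' product identity that forces $a_i^2 = b_i^2$.

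For part (1), the hypothesis $a_ia_j = b_ib_j$ gives $\sgn(a_ia_j) = \sgn(b_ib_j)$ for every $i \neq j$ (in particular, one side vanishes exactly when the other does), so Proposition \ref{prop1} applies and $x, y$ weakly have the same signs or weakly have opposite signs.

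For part (2), I split on whether the zero supports of $x$ and $y$ disagree. Suppose first that there is an index $i_0$ with $a_{i_0} \neq 0$ and $b_{i_0} = 0$ (the reverse configuration is symmetric). For every $j \neq i_0$ the hypothesis gives $a_{i_0}a_j = b_{i_0}b_j = 0$, and because $a_{i_0} \neq 0$ we conclude $a_j = 0$. Hence $x$ is supported only at $i_0$, and we land in case (ii) with $i = i_0$; the symmetric configuration gives case (i).

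In the remaining regime, $a_i = 0$ if and only if $b_i = 0$ for every $i$, so $I = \{i : a_i \neq 0\} = \{i : b_i \neq 0\}$ and part (iii)(a) is immediate. For (iii)(b), fix $i \in I$; assuming $|I| \geq 3$, pick distinct $j, k \in I \setminus \{i\}$ and multiply $a_ia_j = b_ib_j$ with $a_ia_k = b_ib_k$, then divide by the nonzero product $a_ja_k = b_jb_k$. This yields $a_i^2 = b_i^2$, so $|a_i| = |b_i|$. The main obstacle I expect is the low-support regime $|I| \in \{1,2\}$, where the three-equation trick collapses: for $|I| = 1$ the pairwise identities degenerate completely, and for $|I| = 2$ only the single identity $a_pa_q = b_pb_q$ is available, which by itself does not pin down the individual moduli. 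Handling these cases will require either subsuming the degenerate configurations into (i) or (ii) via a finer support analysis, or an auxiliary argument leveraging the sign conclusion from (1) together with the pairwise identity.
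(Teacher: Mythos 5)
Your argument is essentially the paper's: part (1) is quoted from Proposition \ref{prop1}, part (2)(i)--(ii) comes from the same observation that a mismatched zero forces the other vector to be supported at a single index, and (iii)(b) uses the identical trick of multiplying $a_ia_j=b_ib_j$ and $a_ia_k=b_ib_k$ and cancelling $a_ja_k=b_jb_k$. The ``obstacle'' you flag in the low-support regime is real, but you should know that the paper's own proof has exactly the same gap: it simply writes ``choose any $j\ne k\in I\setminus\{i\}$,'' which tacitly assumes $|I|\ge 3$ and is never justified. Moreover, this gap cannot be closed by a cleverer argument, because conclusion (iii)(b) is false as stated in the degenerate cases: take $x=(2,1,0,\ldots,0)$ and $y=(1,2,0,\ldots,0)$ (or $x=(2,0,\ldots,0)$, $y=(1,0,\ldots,0)$). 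Then $a_ia_j=b_ib_j$ for all $i\ne j$, cases (i) and (ii) fail, $I=\{1,2\}$ (resp.\ $\{1\}$), yet $|a_1|\ne|b_1|$. So your instinct that the $|I|\in\{1,2\}$ configurations need to be ``subsumed into (i) or (ii) or handled by an auxiliary argument'' cannot succeed without amending the statement itself (e.g.\ requiring $|I|\ge 3$, or adding these small-support configurations to the list of exceptional cases). In short: your proof is as complete as the published one, and the caveat you raise is a genuine defect of the theorem's formulation rather than a missing step on your part.
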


\begin{proof}
(1)  This follows from Proposition \ref{prop1}.
\vskip12pt
(2)  (i)  Assume $a_i=0$ but $b_i\not= 0$.  Then for all $j\not= i$ we have
$a_ia_j=0=b_ib_j$ and so $b_j=0$.

(ii)  This is symmetric to (i).

(iii)  If (i) and (ii) fail, then by definition, for any
$i$, either both $a_i$ and $b_i$ are zero or they are both
non-zero, which proves (a).

Fix $i\in I$.  Choose any $j\not= k \in I
\setminus \{i\}$.
Then
\[ a_ia_j=b_ib_j\mbox{ and } a_ia_k=b_ib_k.\]
Multiplying the left-hand-sides and the right-hand-sides
yields,
\[ a_i^2a_ja_k = b_i^2b_jb_k.\]
Since $a_j,a_k,b_j,b_k$ are all non-zero and
$a_ja_k=b_jb_k$, we have that $a_i^2=b_i^2$.

\end{proof}

\end{document}